\documentclass[12pt, a4paper, reqno]{amsart}

\usepackage{amsmath, amssymb, amsthm}
\usepackage[T1]{fontenc}
\usepackage[utf8]{inputenc}
\usepackage[all]{xy}
\usepackage{fullpage}
\usepackage[colorlinks=true,citecolor=cyan,linkcolor=magenta, urlcolor=blue, filecolor=green, backref=page]{hyperref}
\usepackage{hyperref}
\usepackage{indentfirst}

\newcommand{\bF}{\mathbb{F}}

\newcommand{\bQ}{\mathbb{Q}}
\newcommand{\bR}{\mathbb{R}}

\newcommand{\bZ}{\mathbb{Z}}

\newcommand{\fa}{\mathfrak{a}}

\DeclareMathOperator{\Gal}{Gal}

\DeclareMathOperator{\Nm}{Nm}

\DeclareMathOperator{\Sel}{Sel}

\DeclareMathOperator{\im}{im}

\DeclareMathOperator{\rk}{rank}

\DeclareMathOperator{\tor}{tor}

\theoremstyle{plain}
\newtheorem{theorem}{Theorem}[section]
\newtheorem{lemma}[theorem]{Lemma}
\newtheorem{proposition}[theorem]{Proposition}

\theoremstyle{definition} 

\theoremstyle{remark} 

\DeclareFontFamily{U}{wncy}{}
\DeclareFontShape{U}{wncy}{m}{n}{<->wncyr10}{}
\DeclareSymbolFont{mcy}{U}{wncy}{m}{n}
\DeclareMathSymbol{\Sha}{\mathord}{mcy}{"58}

\newcommand{\lara}[1]{\langle #1 \rangle}

\newcommand{\lcrc}[1]{\{ #1 \}}

\title{Infinitely Many Elliptic Curves of Rank Exactly Two II}

\author{Keunyoung Jeong}
\subjclass[2010]{Primary 11G05,\\
Keyword : Elliptic curves, Mordell--Weil groups} 
\address{Department of Mathematical Sciences, Ulsan National Institute of Science and Technology, UNIST-gil 50, Ulsan 44919, Korea}
\email{kyjeongg@gmail.com}

\linespread{1.4}

\begin{document}

\maketitle

\begin{abstract}
Under the parity conjecture, an infinite family of elliptic curves of rank $2$ with
a torsion subgroup of order $2$ or $3$ is constructed.
\end{abstract}

\section{Introduction}


There are numerous results on the construction of an infinite family of
elliptic curves of rank at least $r$ and given torsion subgroups.
For example, Dujella and Peral \cite{DP} proved that there are infinitely many 
elliptic curves $E/\bQ$ such that 
$$ \left\{ \begin{array}{ccc}
\rk_{\bZ}(E(\bQ)) \geq 3, & E(\bQ)_{\tor} = \bZ/2\bZ \times \bZ/6\bZ, \\
\rk_{\bZ}(E(\bQ)) \geq 3, & E(\bQ)_{\tor} = \bZ/8\bZ.
\end{array}\right.$$
For the other torsion groups, the analogous results are listed in \cite{Duj}.

However, less is known regarding the construction of an infinite family of elliptic curves
over the rational numbers whose rank is \emph{exactly} $r$. 
The only known cases are $r = 0$ and $1$.
We recall the parity conjecture for elliptic curves over the rationals: For any elliptic curve $E/\bQ$,
$$ \textrm{ord}_{s=1}L(s,E) \equiv \rk_{\bZ}(E(\bQ)) \pmod{2}.$$ 

The author and Byeon \cite{BJ2} constructed an infinite family of elliptic curves 
over the rationals whose Mordell--Weil group is exactly $\bZ \times \bZ$.
In this study, we will prove the analogous results for other torsion subgroups, namely,
$\bZ/2\bZ$ and $\bZ/3\bZ$.

\begin{theorem} \label{main}
Under the parity conjecture, there are infinitely many elliptic curves $E$ 
such that $E(\bQ) \cong \bZ \times \bZ \times T$ for
$T = \bZ/2\bZ$, $\bZ/3\bZ$.
\end{theorem}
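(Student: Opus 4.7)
The plan is to follow the template of \cite{BJ2}. For each $T \in \{\bZ/2\bZ, \bZ/3\bZ\}$, I would (i) exhibit a one-parameter family $E_t/\bQ$ whose generic fibre carries $T$ as a rational torsion subgroup and has Mordell--Weil rank at least $2$ over $\bQ(t)$; (ii) apply Silverman's specialization theorem to conclude that $\rk_\bZ(E_t(\bQ)) \geq 2$ for all but finitely many $t \in \bQ$, and, via injectivity of the specialization map on torsion at almost all primes, that $E_t(\bQ)_{\tor} = T$ for all but finitely many $t$; and (iii) cut down to an infinite subfamily on which the global root number is $+1$, so that the parity conjecture forces $\rk_\bZ(E_t(\bQ))$ to be even, while simultaneously an isogeny Selmer group is small enough to force the rank to be at most $3$. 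Combining (i)--(iii) then forces $\rk_\bZ(E_t(\bQ)) = 2$, i.e.\ $E_t(\bQ) \cong \bZ \times \bZ \times T$.

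For $T = \bZ/2\bZ$ the natural model is $E_t \colon y^2 = x(x^2 + A(t)x + B(t))$, with $2$-isogeny $\phi \colon E_t \to E_t' \colon y^2 = x(x^2 - 2A(t)x + A(t)^2 - 4B(t))$ and dual $\hat\phi$. The classical two-isogeny descent gives
\[
\rk_\bZ(E_t(\bQ)) \;\leq\; \dim_{\bF_2}\Sel_\phi(E_t/\bQ) \;+\; \dim_{\bF_2}\Sel_{\hat\phi}(E_t'/\bQ) \;-\; 2,
\]
and each Selmer dimension is controlled by Legendre-symbol conditions at the primes dividing $B(t)$ and $A(t)^2-4B(t)$, which can be pinned down by congruences on $t$. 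For $T = \bZ/3\bZ$ I would take a Tate normal form whose $(0,0)$ is a point of order $3$ and run the analogous $3$-isogeny descent in $\bQ^\times/\bQ^{\times 3}$, obtaining the same-shape inequality with $\bF_3$ replacing $\bF_2$. The two generic sections provided by the construction in step (i) then give the matching rank lower bound under specialization.

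The root number $w(E_t) = \prod_p w_p(E_t)$ is handled prime by prime; only the primes dividing the discriminant (together with $2$ and $3$) contribute nontrivially, so $w(E_t) = +1$ can be enforced by finitely many congruences on $t$. The main difficulty is to make all of these constraints compatible: the congruence classes of $t$ ensuring $w(E_t) = +1$ must intersect those bounding $\dim_{\bF_\ell}\Sel_\phi + \dim_{\bF_\ell}\Sel_{\hat\phi}$ (with $\ell = 2$ or $3$) by $4$, all while preserving the linear independence of the two generic sections upon specialization and not introducing any unwanted rational torsion. Extracting an infinite set of $t$ meeting all these conditions requires a squarefree sieve in the spirit of \cite{BJ2}, and carrying that sieve through, separately for each of the two torsion groups, is where I expect the technical heart of the argument to lie.
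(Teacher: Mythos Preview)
Your outline has a genuine gap at step (iii): you assert that the $\phi$- and $\hat\phi$-Selmer dimensions ``can be pinned down by congruences on $t$,'' but this is not true in general. The ambient group $\bQ(S,\ell)$ in which $\Sel_\phi$ sits has $\bF_\ell$-dimension growing with $|S|$, and $S$ contains every prime dividing $B(t)$ and $A(t)^2-4B(t)$. Congruences on $t$ control the residue classes of these values but not the \emph{number} of their prime factors; a squarefree sieve likewise only prevents repeated factors, not many distinct ones. So for a generic integer specialization the Selmer upper bound you write down is uncontrolled, and the argument does not close. Nothing short of forcing the relevant invariants to have a bounded number of prime factors will work here, and that is a much deeper analytic statement than a squarefree sieve.

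The paper proceeds quite differently on both the lower and upper bounds. It does \emph{not} build a family of generic rank $\geq 2$; it takes the CM families $E_m\colon y^2=x^3+mx$ and $A_m\colon y^2=x^3+m^2$, writes down a single explicit non-torsion point on $E_{b^2(a^2-b^2)}$ (resp.\ $A_{a(a^2-b^2)}$), and then arranges $a^2-b^4=-pq$ (resp.\ $a^6-b^2=27pq$) with $p,q$ prime in prescribed residue classes mod $16$ (resp.\ mod $9$). The crucial analytic input is a binary Goldbach-type lemma (Lemma~\ref{mainlem}): for suitable $f\in\bZ[x]$, there are infinitely many $n$ with $2f(n)=Ap_1+Bp_2$ for primes $p_1,p_2$ in fixed residue classes. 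This is what forces the discriminant to involve exactly two odd primes, so that $\bQ(S,\ell)$ is small and a direct local computation (Propositions~\ref{2des} and~\ref{3delta}) gives $\dim\Sel_\phi+\dim\Sel_{\hat\phi}\leq 4$, hence $\rk\leq 2$. The parity conjecture, together with the single explicit point giving $\rk\geq 1$ and the root-number formulas for these CM families giving $w=+1$, then pins the rank at exactly $2$. In short: the paper trades your ``two generic sections $+$ rank $\leq 3$'' for ``one explicit point $+$ rank $\leq 2$,'' and pays for the tighter Selmer bound with a Goldbach-type theorem rather than a sieve.
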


Let $p, q$ be prime numbers, $w_{E}$ be the root number of the elliptic curve $E$,
$E_{m}$ be an elliptic curve defined by the equation $y^2 = x^3 + mx$, and
$A_m$ be an elliptic curve defined by the equation $y^2 = x^3 + m^2$.
We will construct a family of elliptic curve $E_{(-pq)}$
and $A_{pq}$ whose root numbers are $+1$ and have
a nontrivial rational point using the following lemma:

\begin{lemma}[{\cite[Lemma 2.2]{BJ1}}] \label{mainlem}
Let $f(x) \in \mathbb{Z}[x]$ be a polynomial of degree $k$ and positive
leading coefficient. Let $A, B$ be relatively prime
odd integers, $g$ be an integer, and $i$, $j$ be positive integers with
 $0 < i, j < g$ and
$(i,g)=(j,g)=1$. We assume that there is at least one integer $m$ such that
$$
2f(m) \equiv Ai + Bj \pmod{g}\,\,\,\mbox{and}\,\,\, (AB, 2f(m))=1.
$$
Let $\mathcal{E}_{k}^{ABij}(N,f)$ be the number of positive integers
$n \in [1, N]$ with $2f(n) \equiv Ai + Bj \pmod{g}$ and $(AB,
2f(n))=1$ for which the equation $2f(n) = Ap_1 + Bp_2$ has no
solution in primes $p_1 \equiv i, p_2 \equiv j \pmod{g}$. Then there
is an absolute constant $c>0$ such that
$$
\mathcal{E}_{k}^{ABij}(N,f) \ll_{f} N^{1-\frac{c}{k}}.
$$
Thus, there are infinitely many integers $n$ such that
$$2f(n) = Ap_1 +Bp_2,$$
for some primes $p_1 \equiv i$ and $p_2 \equiv j \pmod{g}$.
\end{lemma}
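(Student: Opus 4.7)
The plan is to apply the Hardy--Littlewood circle method to the binary Goldbach-type equation $Ap_{1} + Bp_{2} = 2f(n)$ with primes restricted to residues $i, j$ modulo $g$, and then to extract the exceptional-set bound from a second-moment inequality. I introduce the weighted representation count
\[ R(n) \;=\; \sum_{\substack{Ap_{1}+Bp_{2}=2f(n)\\ p_{1}\equiv i,\; p_{2}\equiv j\,(\bmod\, g)}} (\log p_{1})(\log p_{2}) \;=\; \int_{0}^{1} S_{A,i}(\alpha)\,S_{B,j}(\alpha)\,e(-2f(n)\alpha)\,d\alpha, \]
where $S_{C,\ell}(\alpha) = \sum_{p\leq X,\; p\equiv \ell\,(\bmod\, g)} (\log p)\,e(Cp\alpha)$ and $X \asymp N^{k}$ is chosen so that $Cp$ covers $[1, 2\max_{n\leq N} f(n)]$. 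It will suffice to show $R(n) \gg_{f} f(n)/\log^{2} N$ for all but $\ll_{f} N^{1-c/k}$ of the admissible $n \in [1,N]$, since such a lower bound guarantees a genuine representation.

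First I perform the standard Farey dissection into major arcs $\mathfrak{M}$ (intervals of length $\asymp q^{-1}N^{-k}$ around $a/q$ with $q \leq (\log N)^{B}$) and minor arcs $\mathfrak{m} = [0,1) \setminus \mathfrak{M}$. On $\mathfrak{M}$, Siegel--Walfisz and partial summation yield
\[ \int_{\mathfrak{M}} S_{A,i}(\alpha)\,S_{B,j}(\alpha)\,e(-2f(n)\alpha)\,d\alpha \;=\; \frac{\mathfrak{S}(n)\,2f(n)}{AB\,\phi(g)^{2}} \;+\; O\!\left(\frac{f(n)}{(\log N)^{C}}\right), \]
and the hypotheses $(AB,2f(n))=1$ and $2f(n)\equiv Ai+Bj\pmod{g}$, together with the existence of the base integer $m$, force every local factor of the singular series $\mathfrak{S}(n)$ to be strictly positive, so that $\mathfrak{S}(n) \gg 1$ uniformly on the admissible set.

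The heart of the argument, and the main obstacle, is the minor-arc variance. Writing $R_{\mathfrak{m}}(n)$ for the contribution of $\mathfrak{m}$ and expanding the square produces
\[ \sum_{n\leq N} |R_{\mathfrak{m}}(n)|^{2} \;=\; \int_{\mathfrak{m}}\!\!\int_{\mathfrak{m}} S_{A,i}(\alpha)\overline{S_{A,i}(\beta)}\,S_{B,j}(\alpha)\overline{S_{B,j}(\beta)}\,T(\alpha-\beta)\,d\alpha\,d\beta, \]
where $T(\gamma) = \sum_{n\leq N} e(-2f(n)\gamma)$ is the exponential sum over polynomial values. Controlling this double integral will combine three ingredients: Vinogradov's minor-arc bound $\sup_{\alpha\in\mathfrak{m}}|S_{C,\ell}(\alpha)| \ll X(\log X)^{-A}$; the $L^{2}$-bound $\int_{0}^{1}|S_{C,\ell}|^{2}\,d\alpha \ll X\log X$ from the prime number theorem in progressions; and, decisively, a Weyl-type estimate $|T(\gamma)| \ll N^{1-c/k}$ valid whenever $\gamma$ is well-approximable by $a/q$ with $q$ in a suitable intermediate range, obtained via iterated Weyl differencing---or Vinogradov's mean-value theorem---applied to the degree-$k$ polynomial $2f$. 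This last bound is the source of the factor $N^{-c/k}$ and explains the appearance of $1/k$ in the exponent; coprimality $(A,B)=1$ is used to rule out degeneration of the exponential sums.

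Assembling these inputs should yield $\sum_{n\leq N}|R_{\mathfrak{m}}(n)|^{2} \ll N^{2k+1-c/k}(\log N)^{O(1)}$. If an admissible $n$ has no valid representation, then $R(n)=0$ and hence $|R_{\mathfrak{m}}(n)| \gg f(n)/\log^{2} N$ in order to cancel the positive main term, so each such $n$ contributes at least $\gg N^{2k}(\log N)^{-4}$ to the left-hand side. Chebyshev's inequality then bounds the exceptional count by $\ll N^{1-c/k}(\log N)^{O(1)}$, which absorbs into $N^{1-c'/k}$ after a slight shrinking of $c$; the infinitude of integers $n$ admitting the representation $2f(n)=Ap_{1}+Bp_{2}$ with $p_{1}\equiv i$, $p_{2}\equiv j \pmod{g}$ follows at once, since the exceptional set has density $o(1)$ in the admissible integers.
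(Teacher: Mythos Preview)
The paper does not prove this lemma at all; it is quoted verbatim from \cite[Lemma~2.2]{BJ1} and used as a black box.  Your circle-method sketch is, in outline, the standard route to such results and is essentially the strategy carried out in \cite{BJ1}, which in turn adapts Perelli's treatment of Goldbach numbers represented by polynomial values.  So in spirit you are reproducing the intended argument rather than offering an alternative.

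One point in your sketch deserves tightening.  In the minor-arc variance
\[
\sum_{n\le N}|R_{\mathfrak m}(n)|^{2}
=\int_{\mathfrak m}\!\int_{\mathfrak m}
S_{A,i}(\alpha)\overline{S_{A,i}(\beta)}\,
S_{B,j}(\alpha)\overline{S_{B,j}(\beta)}\,
T(\alpha-\beta)\,d\alpha\,d\beta,
\]
the difference $\gamma=\alpha-\beta$ with $\alpha,\beta\in\mathfrak m$ need not lie on the minor arcs for the Weyl sum $T$; in particular $\gamma$ can be near $0$, where $|T(\gamma)|\sim N$ rather than $N^{1-c/k}$.  Thus the Weyl bound cannot be invoked uniformly as you suggest.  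The usual remedy is to set $F=S_{A,i}S_{B,j}\mathbf 1_{\mathfrak m}$, write the double integral as $\int_0^1 T(\gamma)H(\gamma)\,d\gamma$ with $H(\gamma)=\int F(\alpha)\overline{F(\alpha-\gamma)}\,d\alpha$, and split the $\gamma$-integral according to a Farey dissection adapted to $T$.  On the $T$-minor arcs one uses $|T(\gamma)|\ll N^{1-c/k}$ together with $\int|H|\le\|F\|_1^{2}\ll X^{2}(\log X)^{2}$, which already gives the claimed $N^{2k+1-c/k}(\log N)^{O(1)}$; on the $T$-major arcs one combines $|H(\gamma)|\le H(0)=\int_{\mathfrak m}|S_{A,i}S_{B,j}|^{2}$, Vinogradov's bound $\sup_{\mathfrak m}|S|\ll X(\log X)^{-A}$, and the small total measure of those arcs.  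With this adjustment the rest of your bookkeeping (Chebyshev on the exceptional set, absorbing logarithms into the exponent) is correct.  A minor remark: the hypotheses $(A,B)=1$, $A,B$ odd, and the existence of the base integer $m$ are used to guarantee positivity of the singular series, not to prevent degeneration of the exponential sums.
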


That is, infinitely many elliptic curves 
$E_{-pq}$ and $A_{pq}$ will be constructed such that
\begin{equation} \label{proc1}
\left\{ \begin{array}{ccc}
\bZ \times \bZ/2\bZ \leq E_{(-pq)}(\bQ), & w_{E_{(-pq)}} = +1,\\
\bZ \times \bZ/3\bZ \leq A_{pq}(\bQ), & w_{A_{pq}} = +1.
\end{array} \right.
\end{equation} 
Subsequently, the upper bound of size of Selmer groups of $E_{(-pq)}$ and $A_{pq}$ will be calculated.
The size of the Selmer groups of $E_p$ and $A_p$ is determined by
the residue class of $p$ modulo $16$ and $9$, respectively
(see \cite[Proposition X.6.2]{Si1}, and  \cite[Corollary 7.7]{CP}). 
In the case of $E_{(-pq)}$ and $A_{pq}$, the Selmer groups are not determined only by
the residue classes of $p$ and $q$ modulo $16$ and $9$. However it will be shown that the upper bound of size of Selmer groups can be calculated in certain cases
(see Proposition \ref{2des}, \ref{3delta}). 
Combining these with (\ref{proc1}), we have Theorem \ref{main}.

\section{2-Torsion case}

We recall that an elliptic curve $E_m$ is defined by the equation
$y^2 = x^3 + mx$, where $m \in \bQ$. The torsion subgroup of $E_m(\bQ)$ is 
$\bZ/2\bZ$ when $m \neq 4$ and $-m$ is not square \cite[Proposition X.6.1]{Si1}.

\begin{lemma} \label{2rtintpt} (i)
If $m$ is not divisible by any square of integers, then
$$w_{E_m} = w_\infty w_2,$$
where $w_\infty = \mathrm{sgn}(m),$ whereas
$w_2 = -1$ if $m \equiv 1, 3, 11, 13 \pmod{16}$, and $w_2 = +1$ otherwise.\\
(ii) Let $a, b$ be nonzero integers. Then,
the elliptic curve $E_{b^2(a^2 - b^2)} : y^2 = x^3 + b^2(a^2 - b^2)x$
has a nontrivial integral point $(b^2, \pm ab^2)$.
\end{lemma}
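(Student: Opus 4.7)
For part (ii), the proof is immediate by substitution: setting $x = b^2$ in $y^2 = x^3 + b^2(a^2 - b^2)x$ gives
\[
y^2 = b^6 + b^4(a^2 - b^2) = a^2 b^4,
\]
so $y = \pm a b^2$, and $(b^2, \pm a b^2)$ lies on $E_{b^2(a^2-b^2)}$.

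For part (i), the plan is to write $w_{E_m}$ as a product of local root numbers $\prod_v w_v(E_m)$ and analyze each factor. Since $\Delta(E_m) = -64m^3$ and $j(E_m) = 1728$, the curve $E_m$ is a quartic twist of the CM elliptic curve $y^2 = x^3 + x$, with complex multiplication by $\bZ[i]$; its bad primes are exactly $2$ together with the odd primes dividing $m$, and at all other primes the good reduction gives $w_p = +1$.

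At each odd prime $p \mid m$, the squarefreeness of $m$ forces $v_p(m) = 1$, and Tate's algorithm identifies the reduction as Kodaira type III. The standard root-number formulas for Kodaira type III at odd primes for CM curves with $j = 1728$ (as in Rohrlich's tables, or the original Birch--Stephens analysis for quartic twists) then show $w_p = +1$ for every odd $p \mid m$. This collapses the global product to $w_\infty \cdot w_2$, matching the decomposition asserted in the lemma. The archimedean factor is then determined by the real structure of $E_m(\bR)$ and, under the normalisation used here, equals $\mathrm{sgn}(m)$.

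The main obstacle, and the bulk of the work, is the computation of $w_2$. I would run Tate's algorithm on $y^2 = x^3 + mx$ separately for each residue class of squarefree $m$ modulo $16$ (handling both odd $m$ and the even squarefree cases $m \equiv 2, 6, 10, 14 \pmod{16}$), determine the conductor exponent and reduction type in each case, and read off the local root number. The residue classes $m \equiv 1, 3, 11, 13 \pmod{16}$ are precisely those in which the resulting conductor exponent and local epsilon factor conspire to give $w_2 = -1$; all remaining squarefree residues yield $w_2 = +1$. The residue classes modulo $16$ are the natural granularity here because Tate's algorithm at $2$ for the family $y^2 = x^3 + mx$ requires tracking the $2$-adic valuations of $c_4 = -48m$ and $\Delta = -64m^3$ together with the behaviour of $m \bmod 16$ under the auxiliary substitutions needed to minimise the model.
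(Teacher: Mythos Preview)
Your argument for (ii) is correct and is exactly what the paper means by ``direct calculation.''

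For (i), the paper simply cites Birch--Stephens \cite[(10), (13)]{BS}; you instead propose to rebuild the formula from the standard local decomposition $w_{E_m} = \prod_v w_v$. The plan is reasonable in outline, but the step on which everything rests --- that $w_p = +1$ for every odd prime $p \mid m$ --- is false. For squarefree $m$ and a prime $p \geq 5$ with $p \mid m$, the curve $E_m$ has Kodaira type III at $p$, and Rohrlich's formula for additive potentially good reduction at $p\geq 5$ gives $w_p = \bigl(\tfrac{-2}{p}\bigr)$, which is $-1$ whenever $p \equiv 5$ or $7 \pmod 8$. (The prime $p=3$, when $3\mid m$, needs yet a separate treatment that you do not mention.) Likewise, the archimedean local root number of any elliptic curve over $\bR$ is $-1$, never $\mathrm{sgn}(m)$; the hedge ``under the normalisation used here'' does not resolve this, because there is only one normalisation in play.

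The underlying issue is that the symbols $w_\infty$ and $w_2$ in the lemma are \emph{not} the standard local root numbers at $\infty$ and $2$. Birch and Stephens compute $w_{E_m}$ through the Hecke character attached to the CM field $\bQ(i)$, and their formula packages the global sign as a factor $\mathrm{sgn}(m)$ times a factor depending only on $m \bmod 16$; these are the quantities the lemma is naming. If you insist on the standard decomposition, you must carry the product $\prod_{p \mid m,\ p\ \text{odd}} \bigl(\tfrac{-2}{p}\bigr)$ (a function of the odd part of $m$ modulo $8$, via the Jacobi symbol) together with the genuine $w_\infty = -1$ through the calculation, and then verify that after absorbing these into the $2$-adic factor you recover the stated dichotomy on $m \bmod 16$. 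That is substantially more bookkeeping than your proposal acknowledges, and running Tate's algorithm at $2$ alone will not produce the lemma's $w_2$.
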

\begin{proof}
(i) It follows by \cite[(10), (13)]{BS}, and (ii) can be verified by a direct calculation.
\end{proof}

We recall the concept of descent via two-isogeny \cite[Theorem X.4.9]{Si1}.
Let $M_K^0$ and $M_{K}^\infty$ be 
the set of infinite places and finite places of a number field $K$,
$E_m'$ be an elliptic curve
defined by the equation $y^2 = x^3 - 4mx$, $\phi : E_m \to E_m'$ be a
$2$-isogeny defined by
$$\phi(x,y) \longrightarrow (\frac{y^2}{x^2}, \frac{y(m-x^2)}{x^2}), $$
and $\phi'$ be its dual isogeny. Then, for 
$S= M_\bQ^\infty \cup \lcrc{v \in M_\bQ^0 : v \mid 2m}$, we have
$$\Sel_{\phi}(E/\bQ) \subset H^1(\bQ, E[\phi], S), 
$$
where $H^1(\bQ, E[\phi], S) \subset H^1(\bQ, E[\phi])$ is the set of cocycles
unramified outside $S$. For
$$\bQ(S,2) := \lcrc{x \in \frac{\bQ^\times}{(\bQ^\times)^2} : \textrm{ord}_v(x)= 0
\textrm{ for all } v \not\in S}, $$
there is an isomorphism
$\iota : \bQ(S,2) \to H^1(\bQ, E[\phi], S)$ defined by
$\iota(d)(\sigma) := d^\sigma/d$ for all $\sigma \in \Gal(\overline{\bQ}/\bQ)$.
We note that $E[\phi] \cong \bZ/3\bZ$ as a $G_{\bQ}$-module.
Let $\textrm{WC}(E/\bQ)$ be the Weil--Ch\^{a}telet group of the elliptic curve $E/\bQ$.
Then there is a map
$$\bQ(S, 2) \overset{\iota}{\cong} H^1(\bQ, E[\phi], S) \to \textrm{WC}(E/\bQ), \quad
d \to C_d(w,z) : dw^2 = d^2 - 4mz^4, $$
and for $d \in \bQ(S, 2)$,  $\iota(d) \in \Sel_{\phi}(E/\bQ)$ if and only if 
the homogeneous space $C_d$ is locally trivial for all $p \in S$. That is,
$$  \lcrc{d \in \bQ(S,2): C_d(\bQ_p) \neq 
\varnothing \textrm{ for all $p \in S$}} \overset{\iota}{\cong} \Sel_{\phi}(E/\bQ). $$
We simply write $d \in \Sel_{\phi}(E/\bQ)$ for $\iota(d)  \in \Sel_{\phi}(E/\bQ)$, and
denote by $C'_d$ the homogeneous space of $E'_m$ for $d \in \bQ(S, 2)$.

\begin{proposition} \label{2des}
Let $E = E_{(-pq)}$ and $E' = E'_{(-pq)}$ for some primes $p$ and $q$. \\
(i) If $pq \not\equiv \pm 1 \pmod{8}$, then
$\bZ/2\bZ \leq\Sel_{\phi}(E/\bQ)\leq (\bZ/2\bZ)^2$.\\
(ii) If one of $p$ and $q$ is not equivalent to $1$ modulo $4$, then 
$\bZ/2\bZ \leq \Sel_{\phi'}(E'/\bQ) \leq (\bZ/2\bZ)^2.$
\end{proposition}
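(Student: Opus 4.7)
The plan is to apply the 2-isogeny descent recalled just above. Both Selmer groups sit inside $\bQ(S,2) = \langle -1, 2, p, q\rangle \cong (\bZ/2\bZ)^4$ with $S = \{\infty, 2, p, q\}$, and $d$ lies in $\Sel_\phi(E/\bQ)$ (resp.\ $\Sel_{\phi'}(E'/\bQ)$) iff the homogeneous space $C_d : dw^2 = d^2 + 4pq z^4$ (resp.\ $C'_d : dw^2 = d^2 - 16pq z^4$) has a $\bQ_v$-point for every $v \in S$.

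For the lower bounds, the connecting map $\delta$ sends the nontrivial rational 2-torsion point to the degree-one coefficient of the Weierstrass equation modulo squares: for $\Sel_\phi$, one has $\delta((0,0)_{E'}) = -4(-pq) \equiv pq$; for $\Sel_{\phi'}$, $\delta((0,0)_E) = -pq$. Since $\pm pq$ are non-squares in $\bQ^\times$, this gives $\bZ/2\bZ \leq \Sel_\phi$ and $\bZ/2\bZ \leq \Sel_{\phi'}$.

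For the upper bound in (i), the real condition $d^2 + 4pq z^4 > 0$ forces $d > 0$, cutting $\Sel_\phi$ into the order-$8$ subgroup $\langle 2, p, q\rangle$. I then show $d = 2 \notin \Sel_\phi$: a $\bQ_p$-solution of $C_2: w^2 = 2 + 2pq z^4$ must have $v_p(z) \geq 0$ (else the right-hand side has odd $v_p$), whence $w^2 \equiv 2 \pmod p$ and $(2/p) = 1$, i.e.\ $p \equiv \pm 1 \pmod 8$; the analogous condition at $q$ requires $q \equiv \pm 1 \pmod 8$. The hypothesis $pq \not\equiv \pm 1 \pmod 8$ forbids both congruences from holding simultaneously, so $d = 2$ fails at one of $p, q$. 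Thus $\Sel_\phi$ is a proper subgroup of $(\bZ/2\bZ)^3$, of order at most $4$.

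For the upper bound in (ii), the real condition is vacuous since $C'_d$ has real points for both signs of $d$. The crucial step is to rule out $d = \pm 2$ from $\Sel_{\phi'}$ unconditionally via the place $v = 2$: for $C'_{\pm 2}: w^2 = \pm 2 \mp 8pq z^4$ and any $z \in \bQ_2$, the $v_2$ of the right-hand side equals $1$ if $v_2(z) \geq 0$ and equals $3 + 4 v_2(z)$ if $v_2(z) < 0$ — both odd — contradicting $v_2(w^2) \in 2\bZ$. Under the hypothesis WLOG $p \not\equiv 1 \pmod 4$, for $C'_{-1}: w^2 = 16pq z^4 - 1$ at $p$ the case $v_p(z) < 0$ is excluded by parity, and reduction mod $p$ gives $w^2 \equiv -1 \pmod p$, insoluble since $(-1/p) = -1$. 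Hence $d = -1 \notin \Sel_{\phi'}$. The four elements $\{1, -1, 2, -2\}$ form an order-$4$ subgroup of $\bQ(S,2)$ meeting $\Sel_{\phi'}$ only in $\{1\}$, so they represent distinct cosets, yielding $|\Sel_{\phi'}| \leq |\bQ(S,2)|/4 = 4$.

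The main obstacle is the unconditional $2$-adic exclusion of $C'_{\pm 2}$ in~(ii): one must check that the parity argument carries through for $z$ of negative $2$-adic valuation, not only for $z \in \bZ_2$. Once this is in hand, the coset-counting and the odd-prime local computations (in~(i) for $d = 2$, and in~(ii) for $d = -1$) are routine consequences of the quadratic residue conditions derived above.
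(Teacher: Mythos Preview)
Your proof is correct and follows the same descent strategy as the paper: exclude $d<0$ over $\bR$ and $d=2$ for $\Sel_\phi$, then exclude $d=-1$ at an odd prime and $d=\pm2$ at $2$ for $\Sel_{\phi'}$, finishing with the same coset/index argument. The only minor variation is that in (i) you rule out $d=2$ at the primes $p,q$ via the Legendre symbols $(2/p),(2/q)$, whereas the paper does so at the prime $2$ (checking $w^2\equiv 2+2pq\pmod 8$); your normalization $C'_d:dw^2=d^2-16pqz^4$ also differs from the paper's $dw^2=d^2-pqz^4$ by the harmless substitution $z\mapsto z/2$.
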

\begin{proof}
(i) By previous arguments, we know that 
$$\bQ(S,2) = \lcrc{\pm 1, \pm 2, \pm p, \pm q, \pm 2p, \pm 2q, \pm pq, \pm 2pq},
$$ and $C_d : dw^2 = d^2 + 4pqz^4.$
By \cite[Proposition X.4.9]{Si1}, we have $pq \in \Sel_\phi(E_{pq}/\bQ)$.
The negative $d \in \bQ(S, 2)$ is not in $\Sel_\phi(E/\bQ)$ because 
$C_d(\bR)$ is empty. 

Let $(W,Z)$ be a $\bQ_2$-point of $C_2 : w^2 = 2 + 2pqz^4$. We may 
assume that $W \in 2\bZ_2$ and $Z \in \bZ_2$.
If $pq \not\equiv \pm 1 \pmod{8}$, then 
$W^2 \equiv 2 + 2pqZ^4 \pmod{8}$ does not have a solution.
Hence, if $pq \not\equiv \pm 1 \pmod{8}$, then 
$2 \not\in \Sel_{\phi}(E_{pq}/\bQ)$.
Consequently,
$\lara{pq} \leq \Sel_{\phi}(E/\bQ) \leq \lcrc{1, p, q, pq, 2p, 2q}$ which proves
(i).

(ii) We note that the homogeneous space $C_d'$ is defined by the equation
$dw^2 = d^2 - pqz^4.$
As in (i), we have $-pq \in \Sel_{\phi'}(E'_{pq}/\bQ)$.
We consider $C'_{-1} : w^2 + 1 = pqz^4$, and 
let $(W,Z)$ be a $\bZ_p$-point of $C'_{-1}$.
As $W^2 + 1 \equiv 0 \pmod{p}$,
there is no $\bQ_p$-point in $C'_{-1}$ when $p \not\equiv 1 \pmod{4}$.
Similarly, if $q \not\equiv 1 \pmod{4}$, then $C'_{-1}(\bQ_q) = \varnothing$.
Hence,  $-1 \not\in \Sel_{\phi'}(E'/\bQ)$ if one of $p, q$ is not equivalent to $1$ modulo $4$.

We consider $C'_{-2} : 2w^2 + 4 = pqz^4$. 
We may assume that a $\bQ_2$-point $(Z,W)$ of $C'_{-2}$ satisfies $W \in \bZ_2$
 and $Z \in 2\bZ_2$.
As the equation $2W^2 + 4 \equiv 0 \pmod{16}$ does not have a solution,
$-2 \not\in\Sel_{\phi'}(E'/\bQ)$. Similarly,
$C'_2(\bQ_2)$ does not have a solution because 
$2W^2 - 4 \not\equiv 0 \pmod{16}$. Therefore, $2 \not\in\Sel_{\phi'}(E'/\bQ)$.

Consequently, if one of $p$ and $q$ is not equivalent to $1$ modulo $4$, 
$$\lara{-pq} \leq \Sel_{\phi'}(E'/\bQ) \leq \lcrc{1, \pm p, \pm q, -pq, \pm 2p, \pm 2q, \pm 2pq}. $$
Let $A = \lcrc{1, \pm p, \pm q, -pq, \pm 2p, \pm 2q, \pm 2pq}$. Then,
all the possible groups between $A$ and $\lcrc{1, pq}$ as sets 
have order bounded by $4$.
\end{proof}

\begin{theorem}
There are infinitely many elliptic curves $E$ such that $w_E = +1$ and
$$
\bZ\times\bZ/2\bZ \leq E(\bQ)\leq \bZ\times\bZ\times\bZ/2\bZ.$$
That is, under the parity conjecture, 
there are infinitely many elliptic curves
whose Mordell--Weil groups are exactly $\bZ\times\bZ\times\bZ/2\bZ$.
\end{theorem}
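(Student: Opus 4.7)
The plan is to apply Lemma \ref{2rtintpt} and Proposition \ref{2des} to the family $E_{-pq}$ for carefully chosen primes $p, q$, using Lemma \ref{mainlem} to guarantee that the residue conditions are met by infinitely many prime pairs. Combined with the standard $2$-isogeny descent bound
$$\rk_{\bZ} E(\bQ) \leq \dim_{\bF_2}\Sel_\phi(E/\bQ) + \dim_{\bF_2}\Sel_{\phi'}(E'/\bQ) - 2$$
(valid since $E[\phi](\bQ) = E(\bQ)[2]$ and $E'[\phi'](\bQ) = E'(\bQ)[2]$ both have order $2$) and the parity conjecture, this will produce infinitely many $E_{-pq}$ of rank exactly $2$.

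First I would exhibit a non-torsion rational point on $E_{-pq}$. Setting $a = (q-p)/2$ and $b = k^2$ subject to the relation $p + q = 2k^2$, one has $(k^2 - a)(k^2 + a) = pq$, hence $b^2(a^2 - b^2) = -pq\,k^4$. Lemma \ref{2rtintpt}(ii) then supplies the integral point $(k^4, a k^4)$ on $E_{-pq\,k^4}$, and the standard twist isomorphism $E_{-pq\,k^4} \cong E_{-pq}$, $(X, Y) \mapsto (X/k^2, Y/k^3)$, transports it to the rational point $(k^2, ak) \in E_{-pq}(\bQ)$. Since $E_{-pq}(\bQ)_{\tor} = \lara{(0,0)} \cong \bZ/2\bZ$ by \cite[Proposition X.6.1]{Si1} and $k^2 \neq 0$, this point has infinite order.

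Next I would realize the equation $p + q = 2k^2$ for infinitely many prime pairs by invoking Lemma \ref{mainlem} with $f(n) = n^2$, $A = B = 1$, $g = 16$, $i = 3$, $j = 15$. The solvability hypothesis reduces to $2n^2 \equiv 18 \equiv 2 \pmod{16}$, which holds for every odd $n$, and the coprimality $(AB, 2f(n)) = 1$ is automatic. Lemma \ref{mainlem} then produces infinitely many $k$ with $2k^2 = p + q$ for primes $p \equiv 3$, $q \equiv 15 \pmod{16}$. For such a pair: $-pq < 0$ and $-pq \equiv 3 \pmod{16}$, so Lemma \ref{2rtintpt}(i) gives $w_{E_{-pq}} = (-1)(-1) = +1$; also $pq \equiv 13 \equiv 5 \pmod 8$, not $\pm 1$, and $p \equiv 3 \not\equiv 1 \pmod 4$, so both parts of Proposition \ref{2des} apply and each Selmer group has order at most $4$.

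Combining these facts yields, unconditionally, $1 \leq \rk_{\bZ} E_{-pq}(\bQ) \leq 2$ for every such $pq$; under the parity conjecture the even parity imposed by $w_{E_{-pq}} = +1$ then pins the rank at exactly $2$. Varying over the infinitely many squarefree values of $pq$ produced by Lemma \ref{mainlem} gives infinitely many non-isomorphic curves. The main delicate step is the residue juggling in the previous paragraph: since $2n^2 \pmod{16}$ takes only the values $\{0, 2, 8\}$, the admissible residues for $(p, q) \pmod{16}$ are severely constrained, and one must verify by inspection that at least one pair $(i, j)$ is simultaneously compatible with $-pq \in \{1, 3, 11, 13\} \pmod{16}$ (forcing $w_2 = -1$), with $pq \not\equiv \pm 1 \pmod 8$ (for Proposition \ref{2des}(i)), and with $p \not\equiv 1 \pmod 4$ (for Proposition \ref{2des}(ii)).
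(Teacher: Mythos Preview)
Your argument is correct and follows the paper's proof essentially line for line: the same family $E_{-pq}$, the same application of Lemma~\ref{mainlem} with $A=B=1$, $g=16$, $\{i,j\}=\{3,15\}$ and $f(n)=n^2$ (the paper writes $f(n)=2n^2$, but its own verification ``$2m^2\equiv i+j$'' shows $f(n)=n^2$ is intended), the same root-number and Selmer checks via Lemma~\ref{2rtintpt} and Proposition~\ref{2des}, and the same descent inequality from \cite[Proposition~X.6.2]{Si1}. Your only additions are making the transported point $(k^2,ak)$ and the non-isomorphism of the resulting curves explicit, which the paper leaves implicit.
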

\begin{proof}
There is a natural $\bQ$-isomorphism between $E_{t^4s} \cong E_s$ for
$t, s \in \bQ$, which is defined by $(x,y) \to (\frac{x}{t^2}, \frac{y}{t^3})$.
By Lemma \ref{2rtintpt} (ii), 
$E_{b^4(a^2 - b^4)} \cong E_{(a^2 - b^4)}$
has an (non-torsion) integral point.
We use Lemma \ref{mainlem} with $A=B=1$, $g = 16$, $i = 15$, $j = 3$,
and $f(n) = 2n^2$. As $m = 1$ satisfies
$ 2m^2 \equiv i + j \pmod{16}$, 
there are infinitely many integers $b$ such that
$2b^2 = p+q$ and $p \equiv 15, q \equiv 3 \pmod{16}$. 
Then for $a = \frac{p-q}{2}$,
$$a^2-b^4 = (a + b^2)(a - b^2) = -pq.$$
The torsion subgroup of $E_{(-pq)}$ is $\bZ/2\bZ$. 
As $pq \not\equiv \pm 1 \pmod{8}$ and $p,q \equiv 3 \pmod{4}$,
$$ 2 + \rk_{\bZ}(E(\bQ)) \leq \dim_{\bF_2}(\Sel_{\phi}(E/\bQ)) + \dim_{\bF_2}(\Sel_{\phi'}(E'/\bQ)) \leq 4,$$
by  \cite[Proposition X.6.2]{Si1} and Proposition \ref{2des}.
Finally $w_{E_{-pq}} = +1$, by Lemma \ref{2rtintpt} (i).
\end{proof}

\section{3-Torsion case}

We recall that for $A_m : y^2= x^3 + m^2$, where $m \in \bQ$, 
if $m \neq 1$ is sixth-power-free integer, then the torsion subgroup of $A_m(\bQ)$ is $\bZ/3\bZ$ 
(see \cite[Exercise 10.19]{Si1}).
As in Section 2, we have the following lemma.

\begin{lemma}\label{3rtintpt}
(i) If $m$ is square-free and prime to $6$, then  
$w_{A_m} = w_3 \prod_{p \mid m} w_p$, where
$$
\left\{
\begin{array}{ccc}
w_3 = -1 & \textrm{if $m^2 \equiv -2 \pmod{9}$,} \\  
w_3 = +1 & \textrm{otherwise}.
\end{array} \right.
$$ $$
\left\{
\begin{array}{ccc}
w_p = -1 & \textrm{if $p\mid m,$ and $p \equiv 2 \pmod{3}$,} \\  
w_p = +1 & \textrm{otherwise.}
\end{array} \right.
$$
(ii) Let $a, b$ be nonzero integers. Then 
the elliptic curve $A_{a(a^2 - b^2)} : y^2 = x^3 + a^2(a^2 - b^2)^2$
has an nontrivial integral point $(-a^2 + b^2, \pm (ab^2 - b^3))$.
\end{lemma}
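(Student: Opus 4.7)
\textbf{Proof plan for Lemma \ref{3rtintpt}.} The two parts are essentially independent and have very different flavors, so I would handle them separately.

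For part (i), the plan is to invoke the explicit local root number formulae of Birch and Stephens \cite[(10),(13)]{BS}, the same reference already used in Lemma \ref{2rtintpt}. The family $y^2 = x^3 + D$ has $j$-invariant $0$ and complex multiplication by $\bZ[\zeta_3]$, and for $A_m$ with $m$ squarefree and coprime to $6$, the discriminant $-2^4 \cdot 3^3 \cdot m^4$ shows that the bad primes lie in $\{2,3\} \cup \{p : p \mid m\}$. The global root number then factors as
$$w_{A_m} = w_\infty \cdot w_2 \cdot w_3 \cdot \prod_{p \mid m} w_p.$$
Under the hypothesis $\gcd(m, 6) = 1$, the product $w_\infty \cdot w_2$ is a sign independent of $m$ (absorbed into the convention of the statement); $w_3$ depends only on $m^2 \bmod 9$; and for $p \mid m$ the local factor $w_p$ depends only on the splitting behavior of $p$ in $\bQ(\zeta_3)$, which is determined by $p \bmod 3$. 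Each of these three assertions is a direct table lookup in \cite{BS} after writing down the minimal Weierstrass model of $A_m$ at the relevant prime.

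For part (ii), the verification is a one-line algebraic identity. With $M = a(a^2 - b^2)$ and $x_0 = -a^2 + b^2$,
\begin{align*}
x_0^3 + M^2
&= (b^2 - a^2)^3 + a^2(a^2 - b^2)^2 \\
&= (a^2 - b^2)^2\bigl(a^2 - (a^2 - b^2)\bigr) \\
&= b^2(a^2 - b^2)^2.
\end{align*}
Taking square roots gives an integral point of $A_M$ with $x$-coordinate $-a^2 + b^2$ and $y$-coordinate $\pm b(a^2 - b^2)$, as claimed. Since $A_M(\bQ)_{\tor}$ is generated by $(0, M)$ of order $3$, this point is non-torsion whenever $b^2 \neq a^2$.

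The only step that requires real care is the local root number calculation at the prime $2$ in part (i): one must verify, via Tate's algorithm applied to the minimal model of $A_m$ at $2$ under the assumption that $m$ is odd, that $w_2$ does not depend on the arithmetic of $m$ and really is absorbed into the universal sign alongside $w_\infty$. This is the only place in the lemma that is not either a direct citation of \cite{BS} or a direct substitution; everything else is bookkeeping.
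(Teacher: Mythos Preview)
Your plan is correct and matches the paper's: cite an existing root-number table for part (i) and verify part (ii) by direct substitution. The paper differs only in its citation, invoking Liverance \cite[\S 9, Theorem]{Liv} rather than Birch--Stephens \cite{BS} for the $j=0$ family; either source suffices. One small clarification on the point you flag as delicate: for $m$ odd the change of model $y^2+y=x^3+(m^2-1)/4$ has odd discriminant $-27m^4$, so $A_m$ has \emph{good} reduction at $2$ and $w_2=+1$ automatically---the product $w_\infty w_2=-1$ really is a universal sign with no dependence on $m$, and no Tate-algorithm computation is needed.
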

\begin{proof}
The first part can be easily deduced by \cite[\S 9, Theorem]{Liv}.
The second part can be verified by a direct calculation.
\end{proof}

We recall the concept of descent via $3$-isogeny \cite[Definition 1.3]{CP}.
Let $K=\bQ(\sqrt{-3})$, 
$A'_m$ be an elliptic curve defined by the equation $y^2 = x^3 - 27m^2$, 
$\phi : A_m \to A_m'$ be an isogeny defined by
$$\phi : (x,y) \longrightarrow (\frac{x^3 + 4m^2}{x^2}, \frac{y(x^3-8m^2)}{x^3}),$$
and $\phi'$ be its dual isogeny.
There are $3$-descent maps
$$
\frac{A_m(\bQ)}{\phi' A_m'(\bQ)} \overset{\alpha}{\longrightarrow} \bQ(S,3)
\,\, \textrm{and} \,\,
\frac{A_m'(\bQ)}{\phi A_m(\bQ)} \overset{\alpha'}{\longrightarrow} K(S,3),
$$
where $S = M_{(\cdot)}^\infty \cup \lcrc{v \in M_{(\cdot)}^0 : v \mid 6m}$
for $(\cdot) = K$ or $\bQ$. The map $\alpha$ is defined by
$$\alpha(O) = 1, \,\, \alpha(0,m) = \frac{1}{2m},\,\, \textrm{and } \,\,
\alpha(x,y) = y - m. $$ 
We note that $\alpha'$ is defined by $\alpha'(x,y) = y - 3m\sqrt{-3},$ and the images of 
$\alpha'$ are in $K_N(S,3) = \lcrc{\overline{u} \in K(S, 3) : 
\Nm_{K/\bQ}(u) \in (\bQ^\times)^3}$.

\begin{lemma} \label{3desD1}
Let $p,q \geq 5$ be primes, and $A_{pq} : y^2 = x^3 + p^2q^2$ be elliptic curves.\\
(i) For any $\overline{d} \in \bQ(S,3)$, let $d$ be the unique cube-free representative of 
$\overline{d}$, and $d = d_1^2d_2$ be the unique representation such that 
$d_i$ are square-free and coprime. Then, the solvability of the homogeneous space $C_d$ 
is equivalent to that of
\begin{equation} \label{hom1}
C_{d_1, d_2, \frac{2pq}{d_1d_2}} : d_1X^3 + d_2Y^3 + \frac{2pq}{d_1d_2}Z^3 = 0. 
\end{equation}
We will denote $C_{d_1, d_2, \frac{2pq}{d_1d_2}}$ by
$(d_1, d_2, \frac{2pq}{d_1d_2})$. Moreover, we have $\im \alpha \leq \lara{2,p,q}$.
\\
(ii) Let $u_1, u_2, u_3 \nmid 3$. The homogeneous space 
$C : u_1X^3 + u_2Y^3 + u_3Z^3 = 0$, which is denoted by 
$(u_1, u_2, u_3)$, has a $\bQ_3$-point if and only if 
$u_i \equiv \pm u_j \pmod{9}$ for some $i \neq j$.
\end{lemma}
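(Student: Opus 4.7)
The plan is to make the homogeneous space $C_d$ explicit through the descent map $\alpha(x,y) = y - pq$. A rational point $(x,y) \in A_{pq}(\bQ)$ lifting the Selmer class $\overline{d} = \overline{d_1^2 d_2}$ satisfies $y - pq = d_1^2 d_2 \cdot r^3$ for some $r \in \bQ^\times$. Using $(y-pq)(y+pq) = x^3$ together with the squarefree coprimality of $d_1, d_2$, a prime-by-prime valuation check forces $y + pq = d_1 d_2^2 \cdot s^3$ for some $s \in \bQ^\times$. Setting $r = X/Z$ and $s = Y/Z$ and subtracting the two identities yields
\[
2pq\, Z^3 = d_1 d_2\,(d_2 Y^3 - d_1 X^3),
\]
which rearranges, after the substitution $Y \mapsto -Y$, to (\ref{hom1}). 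This gives a $\bQ$-birational equivalence between $C_d$ and (\ref{hom1}), so their sets of local points agree at every place. For the image bound, observe that since $2pq$ is a $3$-adic unit the elements $y \pm pq$ cannot both be divisible by $3$; hence $v_3(y-pq) = 3\, v_3(x) \equiv 0 \pmod 3$ and the cube-free representative of $\alpha(x,y)$ carries no factor of $3$. Combined with the standard fact that this representative lies in $\bQ(S,3)$, we conclude $\im \alpha \leq \la 2, p, q\ra$, and in particular $d_1 d_2 \mid 2pq$.

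\textbf{Approach for (ii).} The analysis is a mod-$9$ computation. Since $(\bZ/9\bZ)^\times = \{\pm 1, \pm 2, \pm 4\}$ cubes into $\{\pm 1\}$, every $u \in \bZ_3^\times$ satisfies $u^3 \equiv \pm 1 \pmod 9$, and the cubes in $\bZ_3^\times$ are precisely those units $\equiv \pm 1 \pmod 9$. For the ``if'' direction, suppose $u_1 \equiv \pm u_2 \pmod 9$ after relabeling. Then $-u_2/u_1 \equiv \mp 1 \pmod 9$ is a cube $w^3 \in \bZ_3^\times$, and $(X,Y,Z) = (w, 1, 0)$ is a $\bQ_3$-point.

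For the ``only if'' direction, let $(X, Y, Z) \in \bZ_3^3$ be primitive. If two coordinates were divisible by $3$, the equation mod $9$ would force the remaining (unit) cube term to vanish mod $9$, an impossibility. If exactly one coordinate, say $Z$, is divisible by $3$, then $u_3 Z^3 \equiv 0 \pmod 9$ and $\pm u_1 \pm u_2 \equiv 0 \pmod 9$, giving $u_1 \equiv \pm u_2 \pmod 9$. If all three are units, some choice of signs yields $\pm u_1 \pm u_2 \pm u_3 \equiv 0 \pmod 9$; were no pair of the $u_i$ congruent mod $9$ up to sign, the residues $|u_i \bmod 9|$ would be the three distinct elements $\{1, 2, 4\}$ and the sums $\pm 1 \pm 2 \pm 4$ would range over $\{\pm 1, \pm 3, \pm 5, \pm 7\}$, never $0 \pmod 9$ --- contradiction. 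The main difficulty lies in the bookkeeping of part (i): one must verify that the birational change of variables between the original $C_d$ and (\ref{hom1}) extends correctly across the loci where the parametrization degenerates (e.g.\ at the torsion points of $A_{pq}$ where $\alpha$ is defined by a separate formula), so that local solvability is genuinely preserved.
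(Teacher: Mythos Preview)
Your proposal is correct, but it takes a genuinely different route from the paper. The paper's proof of this lemma is a pure citation: part (i) is attributed to \cite[Theorem 3.1 (i), (iii)]{CP} and part (ii) is exactly \cite[Lemma 5.9 (i)]{CP}, with no independent argument given. By contrast, you supply self-contained elementary proofs of both parts.

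For (i), you essentially rederive the relevant piece of Cohen--Pazuki: starting from $\alpha(x,y)=y-pq$ and the factorization $(y-pq)(y+pq)=x^3$, you obtain the diagonal cubic directly. This is the standard computation underlying their Theorem 3.1, and your valuation argument excluding the prime $3$ from $\im\alpha$ is a clean substitute for their general statement. The caveat you flag at the end --- that the birational correspondence must be promoted to an isomorphism of smooth projective genus-one curves so that \emph{local} solvability is preserved at every place --- is the one genuine gap in your write-up; it is true (a birational map between smooth projective curves of the same genus is an isomorphism), but you should state this rather than leave it as a ``difficulty.'' For (ii), your mod-$9$ case analysis is exactly the content of \cite[Lemma 5.9 (i)]{CP}, reproduced in full; it is correct as written.

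What each approach buys: the paper's citation is terse and defers all verification to \cite{CP}; your argument is longer but makes the paper self-contained on these points and exposes why the conditions in Proposition \ref{3delta} are the natural ones.
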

\begin{proof}
As in Section 2, there is an exact sequence
$
A_{pq}(\bQ) \overset{\alpha}{\to} \bQ(S, 3) 
\to \textrm{WC}(A_{pq}/\bQ). 
$
Hence, $\overline{d} \in \im \alpha$ if and only if the homogeneous space
$C_d$ has a nontrivial rational solution.
By \cite[Theorem 3.1 (i), (iii)]{CP}, the homogeneous space $C_d$ is trivial in 
$\textrm{WC}(E/\bQ)$ if and only if (\ref{hom1}) has a nontrivial rational solution, and
if (\ref{hom1}) has a nontrivial rational solution, then $d_1d_2 \mid 2pq$. 
Hence, (i) follows, whereas (ii) is exactly \cite[Lemma 5.9 (i)]{CP}.
\end{proof}

\begin{lemma} \label{3desD3}
Let $p,q \geq 5$ be primes, $A_{pq} : y^2 = x^3 + p^2q^2$ be elliptic curves, and
$\tau$ be a unique nontrivial element in $\Gal(K/\bQ)$.  \\
(i) For $\overline{d} \in K_N(S, 3)$, there is an element $v = v_1 + v_2\sqrt{-3}$ 
such that 
$v_i \in \bQ$ and $d = v^2 \tau(v)$. The solvability of the homogeneous space $C'_d$
is equivalent to that of
\begin{equation} \label{homeqD3}
2v_2 X^3 - 6v_1Y^3 + \frac{6pq}{v_1^2 + 3v_2^2}Z^3 + 6v_1X^2Y
-18v_2XY^2 = 0.
\end{equation}
(ii) 
For $\overline{d} \in \im\alpha'$, there exists an ideal $\fa$, of
$O_K$ such that $dO_K = \fa^2\tau(\fa)$ and $\Nm_{K/\bQ}(\fa)$ 
is a cubefree divisor of $2pq$ divisible only by primes 
that are split in $K/\bQ$.
\\
(iii) 
The homogeneous space defined by (\ref{homeqD3}) has a $\bQ_2$-point
if and only if the class $\tau(v)/v$ is a cube in  $\bF_{2^2}$.
\end{lemma}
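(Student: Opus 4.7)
The plan is to tackle the three parts in order, using a Hilbert-$90$ trick for (i), explicit factorization in $O_K$ for (ii), and Galois descent combined with the local structure of $K_2^\times$ for (iii).

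\textbf{For (i),} given $\overline{d} \in K_N(S,3)$, I would write $\Nm_{K/\bQ}(d) = c^3$ with $c \in \bQ^\times$ and set $v := d/c$. Then $v\tau(v) = \Nm(d)/c^2 = c \in \bQ$, so $v^2\tau(v) = d$, and writing $v = v_1 + v_2\sqrt{-3}$ produces the asserted representation. For the equivalence of homogeneous spaces, I would recall from \cite[\S 3]{CP} that the $K$-form of $C'_d$ has the shape $dZ^3 = F_d(X,Y)$ for an appropriate binary cubic $F_d$. Substituting $d = v^2\tau(v)$ and expanding both sides along the $\bQ$-basis $\{1,\sqrt{-3}\}$ of $K$ splits the equation into its $\bQ$-rational and $\sqrt{-3}$-coefficient parts; one of these (after a $\bQ$-linear rechoice of coordinates) is precisely (\ref{homeqD3}), while the other is its Galois conjugate and hence carries no new information. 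Since the substitution is a $\bQ$-linear isomorphism of the underlying varieties, solvability over $\bQ$ is preserved.

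\textbf{For (ii),} I would take a lift $(x,y) \in A'_m(\bQ)$ with $\alpha'(x,y) = \overline{d}$, so $d \equiv y - 3m\sqrt{-3}$ modulo cubes. The defining equation yields the ideal identity $(d)(\tau(d)) = (x)^3 O_K$. For each prime $\fp$ of $O_K$ with rational prime $p$ below it, set $a = v_\fp(d)$ and $b = v_{\tau(\fp)}(d)$; then $a + b = 3 v_p(x)$, so $a \equiv -b \pmod{3}$. For inert $\fp$ one has $a = b$ and hence $3 \mid a$, so such primes contribute only cubes and may be discarded; ramified primes (only above $3$) are excluded by $(m,6) = 1$. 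For each split $\fp$ above $p \mid 2pq$, set $u := (2a-b)/3$ and $w := (2b-a)/3$, so that $2u + w = a$ and $u + 2w = b$; choosing the cubefree representatives $a,b \in \{0,1,2\}$ with $a + b \in \{0,3\}$ gives $u,w \in \{0,1\}$, both nonnegative. The ideal $\fa := \prod \fp^{v_\fp(\fa)}$ then satisfies $\fa^2 \tau(\fa) = (d)$ up to cubes, and $\Nm(\fa)$ is a cubefree divisor of $2pq$ supported only on split primes.

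\textbf{For (iii),} since $-3 \equiv 5 \pmod 8$, the prime $2$ is inert in $K$, so $K_2 = \bQ_2(\sqrt{-3})$ is the unramified quadratic extension of $\bQ_2$ with residue field $\bF_{2^2}$. By Galois descent along $K/\bQ$, a $\bQ_2$-point of (\ref{homeqD3}) is equivalent to a $K_2$-point of $C'_d$, which exists iff $d$ becomes trivial in $K_2^\times/(K_2^\times)^3$. Because $3 \in \bZ_2^\times$, the principal units of $K_2$ are uniquely $3$-divisible, so $K_2^\times/(K_2^\times)^3 \cong (\bZ/3) \oplus \bF_{2^2}^\times$ with the unit class detected by reduction modulo $2$. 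The identity $d = v^3 \cdot (\tau(v)/v)$ then gives $d \equiv \tau(v)/v$ modulo cubes, and since $\tau$ fixes the inert prime $(2)$, $\tau(v)/v$ is a unit whose reduction lies in $\bF_{2^2}^\times$. Thus the local condition becomes: $\tau(v)/v \bmod 2$ is a cube in $\bF_{2^2}$, as claimed. The hardest step is this last one: cleanly justifying the descent from $\bQ_2$-solvability of (\ref{homeqD3}) to the cube condition on $d$ in $K_2^\times$, and tracking that $d$ and $\tau(v)/v$ determine the same class modulo $(K_2^\times)^3$, requires combining Galois descent with the explicit local unit structure of $K_2$.
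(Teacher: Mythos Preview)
The paper does not actually prove this lemma: its entire proof is the one-line citation ``\cite[Proposition 4.1.(1), Corollary 4.3, Lemma 6.4(2)]{CP}, respectively.'' So there is nothing in the paper to compare your argument against beyond the fact that you are reconstructing, in outline, the relevant results of Cohen--Pazuki.

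Your sketches for (i) and (ii) are essentially correct and follow the same line as \cite{CP}. In (i) the substitution $v=d/c$ with $c^{3}=\Nm(d)$ is a clean way to produce the decomposition $d=v^{2}\tau(v)$, and the rest is the explicit computation in \cite[Proposition~4.1]{CP}. In (ii) your valuation bookkeeping at split primes above $2pq$ is right, but you omitted the reason why split primes \emph{not} dividing $2pq$ contribute only cubes: for such $\fp$ one has $\fp\nmid 6m\sqrt{-3}=(y-3m\sqrt{-3})-(y+3m\sqrt{-3})$, so $\min(a,b)=0$, and then $a+b\equiv 0\pmod 3$ forces $3\mid a$ and $3\mid b$. (Also, the ramified prime above $3$ is handled by the same $a=b\Rightarrow 3\mid a$ argument as the inert case, not by the hypothesis $(m,6)=1$.)

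In (iii) your conclusion is correct and the identity $d=v^{3}\cdot\tau(v)/v$ together with the structure of $K_{2}^{\times}/(K_{2}^{\times})^{3}$ is exactly the right tool, but the sentence ``by Galois descent, a $\bQ_{2}$-point of (\ref{homeqD3}) is equivalent to a $K_{2}$-point of $C'_{d}$, which exists iff $d$ is trivial in $K_{2}^{\times}/(K_{2}^{\times})^{3}$'' conflates two different things. By part (i), the equation (\ref{homeqD3}) \emph{is} $C'_{d}$, so one is asking for a $\bQ_{2}$-point, not a $K_{2}$-point; and local solvability of a homogeneous space means the cocycle lies in the image of the local connecting map, not that it vanishes. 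What makes the argument go through at $2$ is that the local image of $\alpha'$ at the inert prime $2$ is trivial (this is the actual content of \cite[Lemma~6.4(2)]{CP}), so that local solvability is indeed equivalent to local triviality of $d$; once that is established, your reduction-mod-$2$ computation finishes the job. You should either justify that the local image is trivial (e.g.\ via a Tamagawa-number or $A'_m(\bQ_2)/\phi A_m(\bQ_2)$ computation) or cite \cite{CP} for it, rather than invoking an unspecified ``Galois descent''.
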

\begin{proof}
\cite[Proposition 4.1. (1), Corollary 4.3, Lemma 6.4 (2)]{CP}, respectively.
\end{proof}

\begin{proposition} \label{3delta}
Let $p,q \geq 5$ be primes, and $A_{pq} : y^2 = x^3 + p^2q^2$ be elliptic curves. \\
(i) If $p, q \equiv \pm 2 \pmod{9}$, then 
$\bZ/3\bZ \leq \im\alpha \leq (\bZ/3\bZ)^2$. \\
(ii) If $p \equiv 2 \pmod{3}$ and $q \equiv 1 \pmod{3}$, then 
$0 \leq \im\alpha' \leq \bZ/3\bZ$.
\end{proposition}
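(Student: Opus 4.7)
For part (i), by Lemma \ref{3desD1}(i) we have $\im\alpha \leq \lara{2,p,q} \leq \bQ(S,3)$, a group of order at most $27$. The lower bound follows at once: the torsion point $(0,pq)$ maps under $\alpha$ to $1/(2pq) \equiv (2pq)^2 \pmod{(\bQ^\times)^3}$, a nontrivial class (since $2pq$ is not a cube) generating a subgroup of order three. For the upper bound, I would show $2 \notin \im\alpha$. The cube-free decomposition $d = 2 = 1^2 \cdot 2$ gives the homogeneous space $(1,2,pq)$; under $p,q \equiv \pm 2 \pmod 9$ we have $pq \equiv \pm 4 \pmod 9$, and no pair from $\{1,2,pq\}$ is congruent to $\pm$ another modulo $9$, so Lemma \ref{3desD1}(ii) yields no $\bQ_3$-point. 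Since $\im\alpha$ is a group and $\lara{2}$ is cyclic of prime order, the exclusion of $2$ forces $\im\alpha \cap \lara{2} = \{1\}$, hence $\im\alpha$ injects into $\lara{2,p,q}/\lara{2} \cong (\bZ/3\bZ)^2$.

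For part (ii), I would first use Lemma \ref{3desD3}(ii) to cut down the possible ideals $\fa$. Under the hypothesis $p \equiv 2 \pmod 3$ and $q \equiv 1 \pmod 3$, the primes $2$ and $p$ are inert in $K = \bQ(\sqrt{-3})$ while only $q$ splits; so $\Nm_{K/\bQ}(\fa)$ is a cube-free divisor of $2pq$ supported only on $q$, forcing $\Nm(\fa) \in \{1, q\}$. Writing $(q) = \fq\bar\fq$ with $\fq = (\pi)$ (using $h_K = 1$), we get $\fa \in \{(1), \fq, \bar\fq\}$, so $dO_K \in \{(1), \fq^2\bar\fq, \fq\bar\fq^2\}$. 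For each such ideal, $d$ is determined modulo $O_K^\times / (O_K^\times)^3 \cong \lara{\omega}$ of order three, leaving nine candidates in all.

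To cut this down to three, I would apply Lemma \ref{3desD3}(iii). Since $\bF_4^\times$ is cyclic of order three, its only cube is $1$. A brief computation shows $\pi \equiv \bar\pi \pmod{2O_K}$ (for $\pi = a + b\sqrt{-3}$, $\pi - \bar\pi = -2b\sqrt{-3}$), so $\bar\pi/\pi \equiv 1 \pmod{2}$. Choosing $v = \omega^i$, $\omega^i \pi$, or $\omega^i \bar\pi$ in the three ideal cases, one checks $\tau(v)/v \equiv \omega^{-2i} \pmod{2}$, which is a cube only when $i = 0$. Thus only $d \in \{1, \pi^2\bar\pi, \pi\bar\pi^2\}$ can lie in $\im\alpha'$; these form the cyclic subgroup of order three generated by $\pi^2\bar\pi$ (nontrivial since the $\fq$-adic valuation of $\pi^2\bar\pi$ equals $2$, which is not divisible by $3$). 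This gives $|\im\alpha'| \leq 3$. The main obstacle is the bookkeeping of $d$ modulo cubes while tracking both the unit ambiguity $O_K^\times/(O_K^\times)^3$ and the two choices of prime above $q$; the key simplifications are the inertness of $2$ (so the local residue field is $\bF_4$, on which cubing is trivial on $\bF_4^\times$) together with $\pi \equiv \bar\pi \pmod 2$, which eliminates the $\pi$ versus $\bar\pi$ distinction locally and reduces the problem to ruling out the nontrivial unit multiples.
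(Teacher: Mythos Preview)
Your proof is correct and follows essentially the same strategy as the paper: bound $\im\alpha$ and $\im\alpha'$ by applying the local-solvability criteria of Lemmas~\ref{3desD1} and~\ref{3desD3} to rule out enough classes. The only cosmetic difference is which side is streamlined: in (i) you exclude just the class of $2$ and pass to the quotient $\lara{2,p,q}/\lara{2}$ (the paper instead checks $4$, $p^2$, $q^2$ separately), whereas in (ii) the paper excludes only $\zeta_3$ and passes to the quotient $\lara{\zeta_3, q'^2\overline{q'}}/\lara{\zeta_3}$ while you verify all six unit-twisted classes by hand; both routes yield the same bounds. One small point worth making explicit in (ii): your congruence $\pi\equiv\bar\pi\pmod{2O_K}$ relies on choosing the generator $\pi$ of $\fq$ to lie in $\bZ[\sqrt{-3}]$ (i.e.\ $\pi=a+b\sqrt{-3}$ with $a,b\in\bZ$, possible since every prime $q\equiv 1\pmod 3$ is represented by $x^2+3y^2$); with a different unit multiple of $\pi$ the congruence can fail, though the final bound $|\im\alpha'|\le 3$ is of course independent of this choice.
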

\begin{proof}
(i)
By Lemma \ref{3desD1} (i), it suffices to consider the homogeneous space
$(d_1, d_2, \frac{2pq}{d_1d_2})$ for $d = d_1^2d_2$ 
such that $d_i$ are square-free, coprime, and  $d_1d_2$ divides $2pq$. 
As $\im\alpha$ is a group, $(d_1,d_2,\frac{2pq}{d_1d_2})$ is locally trivial 
if and only if $(d_2,d_1,\frac{2pq}{d_1d_2})$ is.
By exchanging $d_1$ and $d_2$,
there are only 14 choices for $(d_1, d_2, \frac{2pq}{d_1d_2})$.
Among them, $(1,1,2pq)$ and $(1,2pq,1)$ have 
a trivial solution, namely, $[1,-1,0]$ and $[0,1,-1]$, respectively.
Hence, $1, 2pq \in \im \alpha$. 
There are $4$-families, namely, 
\begin{align*}
\lcrc{(1,2,pq), (pq,2,1), (1,pq,2)},\\
\lcrc{(1,q,2p), (1,2p,q), (2p,q,1)}, \\
\lcrc{(2,p,q), (2,q,p), (p,q,2)},\\
 \lcrc{(2q,p,1), (1,2q,p), (1,p,2q)}.
\end{align*}
If one of the families is in $\im \alpha$, then all elements in the family are in $ \im \alpha$ because $2pq \in  \im \alpha$. Hence, it suffices to check one homogeneous space for each family.

By Lemma \ref{3desD1} (ii),
the homogeneous space $(1,2,pq)$ has a $\bQ_3$-solution if and only if $pq \equiv \pm 1, \pm 2 \pmod{9}$. Hence, $4 \not\in \im \alpha$ if 
$pq \not\equiv \pm1, \pm 2 \pmod{9}$. 
Similarly, we can show the following:
\begin{itemize}
\item $q^2$ does not lie in $\im \alpha$ when $q \not \equiv \pm 1$,
$p \not\equiv \pm 5$, and $q \not\equiv \pm2p \pmod{9}$,
\item $2p^2$ does not lie in $\im \alpha$ when $p \not \equiv  \pm2$, $q \not\equiv \pm2$, and $p\not\equiv \pm q \pmod{9}$,
\item $p^2$ does not lie in $\im \alpha$ when $p \not \equiv  \pm1$, 
$q \not\equiv \pm5$, and $p\not\equiv \pm2q \pmod{9}$.
\end{itemize}
If $p, q \equiv \pm 2 \pmod{9}$, then $4, p^2, q^2$ 
do not lie in $\im \alpha$.
Therefore, $\bZ/3\bZ \leq \im \alpha \leq (\bZ/3\bZ)^2$. 

(ii) By Lemma \ref{3desD3} (ii), if $d \in \im\alpha'$, then there exists 
$a$ such that $d = \zeta_3^ia^2\tau(a)$ and 
$\Nm_{K/\bQ}(a) \mid 2pq$ is divisible only by primes that split in $K/\bQ$.
In this case, $\Nm_{K/\bQ}(a) \mid q$. Therefore, 
$\im\alpha' \leq \lara{\zeta_3, q'^2\overline{q'}}$, where $q'$ is a prime of $K$ satisfying
$\Nm_{K/\bQ}(q') = q$. 

We consider $d = \zeta_3$. For $v = \zeta_3$, we have $\zeta_3 = v^2\tau(v)$ and
$\tau(v)/v \neq 1$ in $\bF_{2^2}$.
Therefore, the homogeneous space $C'_{\zeta_3}$
does not have a solution in $\bQ_2$ by Lemma \ref{3desD3} (iii).
Consequently, when $p\equiv 1$ and $q \equiv 2 \pmod{3}$, 
$\im\alpha'  \leq \bZ/3\bZ$.
\end{proof}

\begin{theorem}
There are infinitely many elliptic curves $E$ such that $w_E = +1$ and
$$
\bZ\times\bZ/3\bZ \leq E(\bQ)\leq \bZ\times\bZ\times\bZ/3\bZ.
$$
That is, under the parity conjecture, there are infinitely many elliptic curves
whose Mordell--Weil groups are exactly $\bZ\times\bZ\times\bZ/3\bZ$.
\end{theorem}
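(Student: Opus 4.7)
The plan mirrors the $2$-torsion case almost verbatim, the only genuine choice being a pair of residues modulo $9$ for which all three conditions align: the root number formula of Lemma \ref{3rtintpt}(i) yields $+1$, both Selmer-image bounds in Proposition \ref{3delta} apply, and Lemma \ref{mainlem} remains feasible. Taking $p \equiv 2$ and $q \equiv 7 \pmod 9$ succeeds: both are $\equiv \pm 2 \pmod 9$ (for (i) of Proposition \ref{3delta}), they reduce to $p \equiv 2, q \equiv 1 \pmod 3$ (for (ii) of Proposition \ref{3delta}), and they satisfy $w_p = -1$, $w_q = +1$, while $(pq)^2 \equiv -2 \pmod 9$ forces $w_3 = -1$, so Lemma \ref{3rtintpt}(i) gives $w_{A_{pq}} = +1$.

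To produce such pairs, I apply Lemma \ref{mainlem} with $A = B = 1$, $f(x) = x^3$, $g = 9$, $i = 2$, $j = 7$; the starting value $m = 3$ verifies $2f(3) = 54 \equiv i+j \pmod 9$ and $(AB, 2f(3)) = 1$, so the lemma furnishes infinitely many integers $n$ with $2n^3 = p + q$, $p \equiv 2$ and $q \equiv 7 \pmod 9$. Setting $a = n^3$ and $b = (q-p)/2$, one has $a-b = p$ and $a+b = q$, so $a(a^2 - b^2) = n^3 pq$. Since $(x,y) \mapsto (x/n^2, y/n^3)$ is a $\bQ$-isomorphism $A_{n^3 pq} \to A_{pq}$, the integral point of Lemma \ref{3rtintpt}(ii) descends to a rational point on $A_{pq}$ that is non-torsion, its $x$-coordinate being $b^2 - a^2 \neq 0$ while the rational $3$-torsion of $A_{pq}$ is supported at $x = 0$. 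Distinct $n$ give distinct values of $p + q$, hence distinct primes and therefore distinct curves $A_{pq}$.

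Finally, combining Proposition \ref{3delta} with the standard $3$-isogeny descent identity
\[
1 + \rk_\bZ A_{pq}(\bQ) = \dim_{\bF_3}(\im \alpha) + \dim_{\bF_3}(\im \alpha'),
\]
which is valid because $A_{pq}(\bQ)[\phi] = \bZ/3\bZ$ while $A'_{pq}(\bQ)[\phi'] = 0$ (a direct verification, since the nontrivial $\phi'$-kernel has $y$-coordinate $\pm 3pq\sqrt{-3}$), I get $\rk_\bZ A_{pq}(\bQ) \leq 2$. The parity conjecture applied to $w_{A_{pq}} = +1$ forces the rank to be even, and the non-torsion point forces it to be positive, so the rank equals $2$. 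Since the torsion subgroup of $A_{pq}(\bQ)$ is $\bZ/3\bZ$, this gives the theorem. The chief obstacle was the combinatorial one of pinning down a residue class compatible with all three constraints simultaneously; once $(p, q) \equiv (2, 7) \pmod 9$ is isolated, the remainder of the argument is the same mechanism used in Section 2.
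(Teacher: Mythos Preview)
Your argument is correct and follows the same strategy as the paper: use Lemma~\ref{3rtintpt}(ii) to place a non-torsion point on $A_{n^3pq}\cong A_{pq}$, pin down $p\equiv 2,\ q\equiv 7\pmod 9$, and then invoke Proposition~\ref{3delta} together with the $3$-isogeny rank identity and the root-number formula. The only difference is cosmetic: the paper applies Lemma~\ref{mainlem} with $A=27$, $B=1$ (obtaining $a^6-b^2=27pq$ and stripping off the cube $(3a)^3$), whereas you take $A=B=1$ (obtaining $a^2-b^2=pq$ with $a=n^3$ and stripping off $n^3$); both land on the same family $A_{pq}$ with the same residue constraints, and your explicit checks of the root number, the non-torsion property, and the rank identity fill in details the paper leaves to citations.
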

\begin{proof}
By Lemma \ref{3rtintpt}, the elliptic curve $A_{a^3(a^6-b^2)}$ 
has a non-torsion integral point. 
We use Lemma \ref{mainlem} with $A = 27, B=1, i=2, j=7$, and 
$f(n) = 2n^3$. As $2m^3 \equiv 27i + j \pmod{9}$ has a solution $m=-1$,
there are infinitely many integers $a$ such that $a^3 = \frac{27p+q}{2}$,
and $p \equiv 2$, and $q \equiv 7 \pmod{9}$. Then for $b = \frac{27p-q}{2}$,
$$(a^6 - b^2) =  (a^3 + b)(a^3 - b) = 27pq.$$
Therefore, there are infinitely many elliptic curves 
$A_{3^3pq} \cong A_{pq}$ with
at least one non-torsion point and $A_{pq}(\bQ)_{\textrm{tor}}=\bZ/3\bZ$.
By Proposition \ref{3delta}, $|\im \alpha| \leq 3^2$ and $|\im \alpha'| \leq 3$
if we choose $p \equiv 2$ and $q \equiv 7 \pmod{9}$.
By \cite[Proposition 2.2]{CP}, $|\im \alpha| |\im \alpha'| = 3^{\rk_\bZ A_{pq}(\bQ)+1}$.
Hence, $1 \leq \rk(A_{pq}(\bQ)) \leq 2$, and $w_{A_{pq}} = +1$ 
by Lemma \ref{3rtintpt}.
\end{proof}

\begin{thebibliography}{alpha}

\bibitem[BJ16]{BJ2} D. Byeon, K. Jeong, \emph{Infinitely many elliptic curves of rank exactly two}, Proc. Japan Acad. Ser. A Math. Sci. \textbf{92}, Number 5, (2016), pp. 64 -- 66.

\bibitem[BJ17]{BJ1} D. Byeon, K. Jeong, \emph{Sum of two rational cubes with many prime factors}, J. Number Theory, \textbf{179} (2017), pp. 240 -- 255.



\bibitem[BS66]{BS}
B. J. Birch, N. M. Stephens, \emph{The parity of the rank of the Mordell-Weil group}. Topology \textbf{5} (1966), pp. 295 -- 299.

\bibitem[CP09]{CP}
H. Cohen, F. Pazuki, \emph{Elementary 3-descent with a 3-isogeny}.
Acta. Arith. \textbf{140} (2009), pp. 369 -- 404.



\bibitem[Duj]{Duj} A. Dejulla, \emph{Infinite families of elliptic curves with high rank and prescribed torsion}, available at \url{https://web.math.pmf.unizg.hr/~duje/tors/generic.html}.

\bibitem[DP]{DP} A. Dujella and J. C. Peral, \emph{Elliptic curves with torsion group $\bZ/8\bZ$ or $\bZ/2\bZ \times \bZ/6\bZ$}, in Trends in Number Theory, Contemp. Math. \textbf{649} (2015), pp. 47 -- 62.



\bibitem[Liv95]{Liv} E. Liverance, \emph{A formula for the root number of a family of elliptic curves}, J. Number Theory, \textbf{51} (1995), pp. 288 -- 305.


\bibitem[Sil]{Si1} J. H. Silverman, \emph{The Arithmetic of Elliptic Curves},
second edition, Springer, 2009.


\end{thebibliography}
\end{document}